\def\supp{{\rm supp}}
\def\be{\begin{enumerate}}
\def\ee{\end{enumerate}}
\def\loc{{\rm\; loc\;}}
\newtheorem{thm}{Theorem}[section]
\newtheorem{cor}[thm]{Corollary}
\newtheorem{pro}[thm]{Proposition}
\newtheorem{lemma}[thm]{Lemma}
\newtheorem{de}[thm]{Definition}
\newtheorem{example}[thm]{Example}
\def\cal{\bf}
\begin{document}

\title{Joins and meets in  effect algebras}

\author{G. Bi\'nczak$^{1}$}

\address{$^1$: Faculty of Mathematics and Information Sciences
	Warsaw University of Technology
	00-662 Warsaw, Poland}
\author{J. Kaleta$^2$}

\address{$^2$ Department of Applied Mathematics
	 Warsaw University of Life Sciences 02-787 Warsaw, Poland}
\author{A. Zembrzuski$^3$}

\address{$^3$ Department of Informatics, Faculty of Applied Informatics and Mathematics,
	Warsaw University of Life Sciences 02-787 Warsaw, Poland}	

\email{$^1$  grzegorz.binczak@pw.edu.pl; $^2$ joanna\_kaleta@sggw.edu.pl;  $^3$  andrzej\_zembrzuski@sggw.edu.pl}

\keywords{effect algebra; homogeneity; sharp element}

\subjclass[2010]{81P10, 03G12}

\begin{abstract}
 We know that each effect algebra  $E$ is isomorphic to $\pi(X)$ for some $E$-test spaces $(X,{\cal T})$.We describe when $\pi(x)\lor \pi(y)$ and $\pi(x)\land\pi(y)$
 exists for $x,y\in{\cal E}(X,{\cal T})$. Moreover we give the formula for $\pi(x)\lor\pi(x)$ and $\pi(x)\land\pi(y)$ using only $x,y$ and tests which are elements of ${\cal T}$.
We obtain an example of finite, not homogeneous effect algebra $E$ such that sharp elements of $E$ form a lattice, whereas $E$ is not a lattice.

\end{abstract}

\maketitle

\section{\bf Introduction}
Effect algebras have been introduced by Foulis and Bennet in 1994 (see \cite{FB94}) for the study of foundations of quantum mechanics (see \cite{DP00}). Independently, Chovanec and K\^opka introduced an essentially equivalent structure called $D$-{\em poset} 
(see \cite{KC94}). Another equivalent structure was introduced by Giuntini and Greuling in \cite{GG94}).

The most important example of an effect algebra is $(E(H),0,I,\oplus)$, where $H$ is a Hilbert space and $E(H)$ consists of all self-adjoint operators $A$ on $H$ such that $0\leq A\leq I$. For $A,B\in E(H)$, $A\oplus B$ is defined if and only if $A+B\leq I$ and then $A\oplus B=A+B$. Elements of $E(H)$ are called {\em effects} and they play an important role in the theory of quantum measurements (\cite{BLM91},\cite{BGL95}).

A quantum effect may be treated as two-valued (it means $0$ or $1$) quantum  measurement that may be unsharp (fuzzy).  If there exist some pairs of effects $a,b$ which posses an orthosum $a\oplus b$ then this orthosum correspond to a  parallel measurement of two effects. 

In this paper we describe  the join and meet in effect algebras. In order to do this we use algebraic $E$-test spaces in the following way:  we know that for every effect algebra $E$ there exists an algebraic $E$-test space $(X,{\cal T})$ such that $E$ is isomorphic to $\Pi(X)$ (please see \cite{G97}) so it is enough to describe joins and meets in $\Pi(X)$.

\begin{de}
In \cite{FB94} an {\em effect algebra} is defined to be an algebraic system $(E,0,1,\oplus)$  consisting of a set $E$, two special elements $0,1\in E$ called  the {\em zero} and the {\em unit}, and a partially defined binary operation $\oplus$ on $E$ that satisfies  the following conditions for all $p,q,r\in E$:
\be
\item{} [Commutative Law] If $p\oplus q$ is defined, then $q\oplus p$ is defined and $p\oplus q=q\oplus p$.
\item {}[Associative Law] If $q\oplus r$ is defined and $p\oplus(q\oplus r)$ is defined, then $p\oplus q$ is defined, $(p\oplus q)\oplus r$ is defined, and $p\oplus(q\oplus r)=(p\oplus q)\oplus r$.
\item {}[Orthosupplementation Law] For every $p\in E$ there exists a unique $q\in E$ such that $p\oplus q$ is defined and $p\oplus q=1$.
\item {}[Zero-unit Law] If $1\oplus p$ is defined, then $p=0$.
\ee
\end{de}

For simplicity, we often refer to $E$, rather than to $(E,0,1,\oplus)$, as being an effect algebra.
\begin{de}
If $p,q\in E$, we say that $p$ and $q$ are orthogonal and write $p\perp q$ iff $p\oplus q$ is defined in $E$. If $p,q\in E$ and $p\oplus q=1$, we call $q$ the {\em orthosupplement} of $p$ and write $p'=q$.
For effect algebras $E_1,E_2$ a mapping $\phi\colon E_1\to E_2$ is said to be an {\em isomorphism} if  $\phi$ is a bijection, 
$a\perp b\iff \phi(a)\perp\phi(b)$, $\phi(1)=1$ and $\phi(a\oplus b)=\phi(a)\oplus \phi(b)$.
\end{de}

It is shown in \cite{FB94} that the relation $\leq$ defined for $p,q\in E$ by $p\leq q$ iff $\exists r\in E$ with $p\oplus r=q$ is a partial order on $E$ and $0\leq p\leq 1$ holds for all $p\in E$. It is also shown that the mapping $p\mapsto p'$ is an order-reversing involution and that $q\perp p$ iff $q\leq p'$. Furthermore, $E$ satisfies the following {\em cancellation law}: If $p\oplus q\leq r\oplus q$, then $p\leq r$.

An element $a\in E$ is {\em sharp} if the greatest lower bound  of the set $\{a,a'\}$ equals $0$ (i.e. $a\land a'=0$). We denote the set of sharp elements of $E$ by $E_S$.

For $n\in{\mathbb N}$ and $x\in E$ let $nx=x\oplus x\oplus\ldots\oplus x$ ($n$-times). We say that \i$(x)=\max\{n\in {\mathbb N}\colon nx$ exists $\}$ is  {\em the isotropic index of} $x$.

An {\em atom} of an effect algebra $E$ is a minimal element of $E\setminus\{0\}$. An effect algebra $E$ is {\em atomic} if for every non-zero element $x\in E$ there exists an atom $a\in E$ such that $a\leq x$.

In \cite{G97} Gudder introduced algebraic $E$-test spaces:

\begin{de} Let $X$ be a nonempty set and ${\mathbb N}_0={\mathbb N}\cup\{0\}$. For $f,g\in{\mathbb N}_0^X$, we write $f\leq g$ if $f(x)\leq g(x)$ for all $x\in X$ and define
$f+g\in{\mathbb N}_0^X$ by $(f+g)(x)=f(x)+g(x)$. If $f\leq g$, we define $g-f\in{\mathbb N}_0^X$ by $(g-f)(x)=g(x)-f(x)$. Denote by $f_0\in{\mathbb N}_0^X$ the function $f_0(x)=0$ for all $x\in X$.

A pair $(X,\cal T)$ is an {\em  $E$-test spaces} if and only if ${\cal T}\subseteq N_0^X$ and the following conditions hold:
\be
\item For any $x\in X$ there exists a $t\in\cal T$ such that $t(x)\not=0$.
\item If $s,t\in\cal T$ with $s\leq t$, then $s=t$.
\ee
\end{de}

\begin{de}
Let ${\cal E}(X,{\cal T})=\{f\in{\mathbb N}_0^X\colon f\leq t$ for some $t\in\cal T\}$. Let $f,g,h\in{\cal E}(X,{\cal T})$ then 
\be
\item $f\perp g$ if $f+g\in{\cal E}(X,{\cal T})$,
\item $f\loc g$ if $f+g\in{\cal T}$,
\item $f\approx_hg$ if $f+h\in{\cal T}$ and $g+h\in{\cal T}$,
\item $f\approx g$ if there exists $h\in{\cal E}(X,{\cal T})$ such that $f\approx_hg$.
\ee
We say that $(X,{\cal T})$ is {\em algebraic} if for $f,g,h\in {\cal E}(X,{\cal T})$, $f\approx_hg$ and $h\perp f$ imply that $h\perp g$.
\end{de}
\begin{de}(see \cite{J03})
Let $E$ be a finite effect algebra. The {\em atomic effect test space of} $E$ is the pair $({\rm At}(E),{\cal T})$, where ${\rm At}(E)$ is the set of all atoms of $E$ and 
$${\cal T}=\{t\in{\mathbb N}_0^{E\setminus\{0\}}\colon \supp(t)\subseteq{\rm At}(E)\land \bigoplus_{a\in E}t(a)a=1\}$$
The elements of ${\cal T}$ are called {\em atomic tests} of $E$. We say that a mapping $f\in{\mathbb N}_0^{E\setminus\{0\}}$ is an {\em atomic event} of $E$ iff there is a $t\in{\cal T}$ such that $t\geq f$. For an atomic event $f$ of $E$, we write shortly $\bigoplus f$ instead of $\bigoplus_{a\in{\rm At}(E)}f(a)a$.
\end{de}

Let $(X,\cal T)$ be an algebraic $E$-test space. If $f\in{\cal E}(X,{\cal T})$, we define $\pi(f)=\{g\in{\cal E}(X,{\cal T})\colon g\approx f\}$ and
$$
\Pi=\Pi(X)=\{\pi(f)\colon f\in {\cal E}(X,{\cal T})\}
$$

We define $0,1\in\Pi$ by $0=\pi(f_0)=\{f_0\}$ and $1=\pi(t)$ for any $t\in\cal T$. We define $\pi(f)'=\pi(g)$ if $g\loc f$ (such $g$ exists since there exists $t\in\cal T$ such that $f\leq t$ and then $t-f\in{\cal E}(X,{\cal T})$ and $(t-f)\loc f$). We define $\pi(f)\oplus\pi(g)=\pi(f+g)$ when $f\perp g$.

\begin{thm} \cite[Theorem 3.2]{G97}\label{3.2}
If $(X,\cal T)$ is an algebraic $E$-test space, then $\Pi(X)$ can be organized into an effect algebra.
\end{thm}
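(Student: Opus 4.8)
\section*{Proof proposal}

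The plan is to verify that $(\Pi(X),0,1,\oplus)$ satisfies the four axioms in the definition of an effect algebra, but the substantive work lies upstream, in checking that every object in the definition of $\Pi(X)$ is well posed. The first task is to show that $\approx$ is an equivalence relation on ${\cal E}(X,{\cal T})$, so that the classes $\pi(f)$ genuinely partition ${\cal E}(X,{\cal T})$ and $\Pi(X)$ is the resulting quotient. Reflexivity is immediate: given $f$, choose $t\in{\cal T}$ with $f\leq t$ and put $h=t-f$, so that $f\approx_h f$. Symmetry is built into the definition of $\approx_h$. Transitivity is the first place the algebraic hypothesis is needed: from $f\approx_{h_1}g$ and $g\approx_{h_2}k$ one must manufacture a single witness $h$ with $f+h,\,k+h\in{\cal T}$, and the closure property ``$f\approx_h g$ and $h\perp f$ imply $h\perp g$'' is exactly the tool that lets orthogonality be transported along the two-step chain.

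Next I would check that $0=\{f_0\}$, that $1=\pi(t)$, that the orthosupplement, and that $\oplus$ are all independent of the chosen representatives. That $1$ is a single class is clear, since any two tests $s,t$ satisfy $s\approx_{f_0}t$; likewise $\pi(f_0)=\{f_0\}$, because $g\approx f_0$ forces $h,\,g+h\in{\cal T}$ with $h\leq g+h$, whence $g=f_0$ by maximality of tests. For the orthosupplement, if $g_1\loc f$ and $g_2\loc f$ then $g_1+f,\,g_2+f\in{\cal T}$ give $g_1\approx_f g_2$, so $\pi(f)'$ does not depend on the complement chosen; one must then also check independence of the representative $f$. The crucial and most delicate point is well-definedness of $\oplus$: I must show that whether $\pi(f)\oplus\pi(g)$ is defined, and what it equals, depends only on $\pi(f)$ and $\pi(g)$, not on $f$ and $g$. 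This is where the algebraic condition does its heaviest lifting, since replacing $f$ by an $\approx$-equivalent $f_1$ requires transporting the orthogonality $f\perp g$ and the identity $\pi(f+g)=\pi(f_1+g)$ across the perspectivity witnessing $f\approx f_1$.

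With well-definedness in hand, the four axioms follow comparatively easily. The commutative law is immediate from $f+g=g+f$ in ${\mathbb N}_0^X$. The associative law reduces to associativity of $+$ together with the observation that $f+g\leq f+g+k\in{\cal E}(X,{\cal T})$ forces $f\perp g$ and $(f+g)\perp k$, after which $\pi\bigl((f+g)+k\bigr)=\pi\bigl(f+(g+k)\bigr)$ holds trivially. The orthosupplementation law is witnessed by the $\pi(f)'$ constructed above; for uniqueness I would use that $\pi(f)\oplus\pi(g)=1$ actually forces $f+g\in{\cal T}$ (again by maximality), so any two solutions $g_1,g_2$ satisfy $g_1\approx_f g_2$. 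Finally, the zero-unit law amounts to showing that if $t+f\in{\cal E}(X,{\cal T})$ for some $t\in{\cal T}$ then $f=f_0$, which follows from condition (2) in the definition of an $E$-test space, since $t\leq t+f\leq s$ for some $s\in{\cal T}$ forces $t=s$ and hence $f=f_0$.

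I expect the main obstacle to be precisely the well-definedness of $\oplus$, together with the closely related transitivity of $\approx$. Both require promoting the single-step algebraic closure axiom to statements about arbitrary representatives, and the argument must carefully track how membership in ${\cal T}$ and the relation $\perp$ are preserved when passing between $\approx$-equivalent functions. Once that machinery is in place, the verification of the axioms themselves is routine.
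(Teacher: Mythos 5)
The paper does not prove this statement at all---it is quoted verbatim from Gudder \cite{G97}---so there is no internal proof to compare against; your outline reproduces the standard argument from that source (reflexivity, symmetry and transitivity of $\approx$, well-definedness of $0$, $1$, the orthosupplement and $\oplus$ via the algebraic closure condition, then routine verification of the four axioms) and each of the steps you sketch does go through as you describe. The one caution is that the algebraicity condition as transcribed in this paper is vacuous as literally stated (since $f\approx_h g$ already entails $h\perp f$ and $h\perp g$); your argument implicitly uses Gudder's intended form, in which the perspectivity witness and the transported local complement are independent functions, and that stronger form is exactly what is needed for your transitivity and well-definedness-of-$\oplus$ steps.
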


\begin{thm}\cite[Theorem 3.3]{G97}\label{3.3}
If $P$ is an effect algebra, there exists an algebraic $E$-test space $(X,\cal T)$ such that $P$ is isomorphic to $\Pi(X)$.
\end{thm}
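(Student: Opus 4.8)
The plan is to manufacture the test space directly out of $P$. Set $X = P \setminus \{0\}$, and for a finitely supported $f \in {\mathbb N}_0^X$ write $\bigoplus f = \bigoplus_{x\in X} f(x)\,x$ for the orthogonal sum in $P$ of $f(x)$ copies of each $x$, saying that $\bigoplus f$ \emph{exists} when all these summands are jointly orthogonal. Let ${\cal T}$ consist of all finitely supported $t\in{\mathbb N}_0^X$ for which $\bigoplus t$ exists and equals $1$. The first test-space axiom holds because every $x\in X$ occurs in the test coming from $x\oplus x'=1$ (take $2x$ if $x=x'$, and $x$ alone if $x=1$, so that $x'=0\notin X$). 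For the second axiom, if $s\le t$ with $s,t\in{\cal T}$, grouping summands gives $1=\bigoplus t=(\bigoplus s)\oplus(\bigoplus(t-s))=1\oplus(\bigoplus(t-s))$, so $\bigoplus(t-s)=0$ by the Zero-unit Law; since every generator is nonzero and a nonempty orthogonal sum of nonzero elements cannot vanish, $t=s$.

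The engine of everything that follows is the elementary lemma, proved by repeated application of the Associative Law, that for finitely supported $f,g$ the sum $\bigoplus(f+g)$ exists if and only if $\bigoplus f$ and $\bigoplus g$ exist and $(\bigoplus f)\oplus(\bigoplus g)$ exists, in which case all three agree. With this in hand the algebraicity of $(X,{\cal T})$ reduces to its substantive content — that perspective events possess the same complements — which follows by combining this lemma with the cancellation law.

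The heart of the proof is to show that $\bigoplus$ is a complete invariant of the perspectivity class, so that $\phi\colon\Pi(X)\to P$, $\phi(\pi(f))=\bigoplus f$, is a well-defined bijection. Well-definedness is the soft direction: if $f\approx_h g$ then $(\bigoplus f)\oplus(\bigoplus h)=1=(\bigoplus g)\oplus(\bigoplus h)$, and the cancellation law forces $\bigoplus f=\bigoplus g$. Injectivity is the delicate direction and is where I expect the real work to lie. Given $\bigoplus f=\bigoplus g=p$, I must exhibit an explicit common complement: let $e\in{\mathbb N}_0^X$ be the function with $e(p')=1$ and $e(x)=0$ for $x\neq p'$ (and $e=f_0$ when $p=1$). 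The lemma then gives $\bigoplus(f+e)=p\oplus p'=1=\bigoplus(g+e)$, so $f+e,g+e\in{\cal T}$, whence $f\approx_e g$ and $\pi(f)=\pi(g)$. Surjectivity is immediate, since for $p\neq 0$ the indicator of $p$ lies below a test and maps to $p$, while $\phi(0)=0$.

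It remains to see that $\phi$ is an effect-algebra isomorphism, the structure on $\Pi(X)$ being already supplied by Theorem \ref{3.2}. Clearly $\phi(1)=1$ and $\phi(0)=0$. To show $\phi$ both preserves and reflects $\oplus$, I would use injectivity to replace arbitrary representatives of $\pi(f),\pi(g)$ by the indicators of $p=\bigoplus f$ and $q=\bigoplus g$; then $\pi(f)\perp\pi(g)$ in $\Pi(X)$ holds precisely when the sum of these two indicators lies in ${\cal E}(X,{\cal T})$, which by the lemma together with complementation is equivalent to $p\oplus q$ being defined in $P$, and in that case $\phi(\pi(f)\oplus\pi(g))=p\oplus q=\phi(\pi(f))\oplus\phi(\pi(g))$. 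The one genuine obstacle in the whole argument is the bookkeeping of finite orthogonal sums: packaging the Associative Law into the grouping/ungrouping lemma and then using it, in concert with cancellation and orthosupplementation, to move back and forth between perspectivity of events and equality of their sums.
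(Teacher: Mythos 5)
The paper does not prove this statement; it is quoted verbatim from Gudder \cite{G97}, so there is no internal proof to compare against. Your construction --- $X=P\setminus\{0\}$ with tests the finitely supported multisets orthosumming to $1$, the grouping/ungrouping lemma for finite orthosums, and cancellation to show $\bigoplus$ is a complete invariant of perspectivity --- is precisely the standard argument from that reference (and is the same idea underlying the paper's ``atomic test space'' in the finite case), and it is correct modulo the routine associativity bookkeeping you explicitly flag.
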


Thanks to the above theorems the study of  effect algebras can be reduced to the study of effect algebras of the form $\Pi(X)$, where $(X,{\cal T})$ is $E$-test space.

\section{Main Theorems}

Theorems \ref{jcon} and \ref{mcon} are the main theorems which one need to prove. In order to do this we need the following three technical lemmas:

\begin{lemma}\label{lem1}
Let $(X,\cal T)$ be an algebraic $E$-test space. Let $f,g\in  {\cal E}(X,{\cal T})$ then $\pi(f)=\pi(g)\iff\exists{t_1,t_2\in{\cal T}} f\leq t_1\land g\leq t_2\land t_1-f=t_2-g$.
\end{lemma}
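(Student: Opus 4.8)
The plan is to reduce the statement to a mechanical unfolding of the definition of $\approx$, using the fact that $\approx$ is an equivalence relation on ${\cal E}(X,{\cal T})$. The first step is to record that $\pi(f)=\pi(g)$ if and only if $f\approx g$. Reflexivity of $\approx$ holds because, choosing $t\in{\cal T}$ with $f\leq t$, the function $h=t-f$ satisfies $f+h=t\in{\cal T}$, so $f\in\pi(f)$; symmetry is immediate from the symmetric definition of $\approx_h$; and transitivity is precisely what is needed for the classes $\pi(f)$ to partition ${\cal E}(X,{\cal T})$, and hence is part of the content underlying Theorem \ref{3.2}, where the algebraic nature of $(X,{\cal T})$ is used. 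Granting these, $\pi(f)=\pi(g)$ forces $f\in\pi(g)$ and hence $f\approx g$, while $f\approx g$ together with symmetry and transitivity yields $\{h:h\approx f\}=\{h:h\approx g\}$, that is, $\pi(f)=\pi(g)$.

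With this reduction in hand, it remains to prove that $f\approx g$ is equivalent to the right-hand condition. For the forward direction, suppose $f\approx g$, so there is $h\in{\cal E}(X,{\cal T})$ with $f\approx_hg$, that is, $f+h\in{\cal T}$ and $g+h\in{\cal T}$. Setting $t_1=f+h$ and $t_2=g+h$ produces two tests with $f\leq t_1$, $g\leq t_2$, and $t_1-f=h=t_2-g$, which is exactly the asserted condition.

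For the converse, suppose there exist $t_1,t_2\in{\cal T}$ with $f\leq t_1$, $g\leq t_2$ and $t_1-f=t_2-g$. Put $h=t_1-f=t_2-g$. Then $h\leq t_1\in{\cal T}$ shows $h\in{\cal E}(X,{\cal T})$, and by construction $f+h=t_1\in{\cal T}$ and $g+h=t_2\in{\cal T}$, so $f\approx_hg$ and therefore $f\approx g$.

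I expect no serious obstacle once the equivalence $\pi(f)=\pi(g)\iff f\approx g$ is available: both implications are a direct translation between the witness $h$ and the pair of tests $t_1=f+h$, $t_2=g+h$. The only point requiring care is the first step, where transitivity of $\approx$ must be invoked; this is where the algebraicity of $(X,{\cal T})$ is essential, and I would cite it from Gudder's development (\cite{G97}) rather than reprove it.
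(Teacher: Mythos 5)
Your proof is correct and follows essentially the same route as the paper's: both directions come down to translating between a witness $h$ with $f\approx_h g$ and the pair of tests $t_1=f+h$, $t_2=g+h$. The paper's proof silently uses the same equivalence $\pi(f)=\pi(g)\iff f\approx g$ that you make explicit (citing \cite{G97} for the fact that $\approx$ is an equivalence relation), so your version is if anything slightly more careful.
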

\begin {proof}
Let   $(X,\cal T)$ be an algebraic $E$-test space and $f,g\in  {\cal E}(X,{\cal T})$. 

Assume that there exist $t_1,t_2\in{\cal T}$ such that $f\leq t_1$, $g\leq t_2$ and $t_1-f=t_2-g$. Let $h=t_1-f=t_2-g\in{\cal E}(X,{\cal T})$ then $f+h=t_1\in{\cal T}$ and $g+h=t_2\in{\cal T}$ so $f\approx_hg$ and $\pi(f)=\pi(g)$.

Assume that $\pi(f)=\pi(g)$ then there exists $h\in {\cal E}(X,{\cal T})$ such that $f\approx_hg$. It follows that $t_1=f+h\in{\cal T}$ and $t_2=g+h\in{\cal T}$. Then $f\leq t_1$, $g\leq t_2$ and $t_1-f=h=t_2-g$.
\end {proof}

\begin{lemma}\label{lem2}
Let $(X,\cal T)$ be an algebraic $E$-test space. Let $f,g\in  {\cal E}(X,{\cal T})$ then $\pi(f)\leq \pi(g)\iff\exists{h\in {\cal E}(X,{\cal T})}$  $\pi(h)=\pi(g)\land f\leq h$.
\end{lemma}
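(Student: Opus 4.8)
The plan is to unfold the effect-algebra order on $\Pi(X)$ in terms of $\oplus$ and its representatives, and to reduce each implication to pointwise manipulations of functions in ${\cal E}(X,{\cal T})$. Recall that in any effect algebra $p\leq q$ means $p\oplus r=q$ for some $r$, and that on $\Pi(X)$ one has $\pi(u)\oplus\pi(v)=\pi(u+v)$ whenever $u\perp v$. I will treat the reverse implication as the routine one and the forward implication as the one carrying the content.

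For the reverse direction, suppose $h\in{\cal E}(X,{\cal T})$ satisfies $\pi(h)=\pi(g)$ and $f\leq h$. I would set $k=h-f\in{\mathbb N}_0^X$. Since $h\leq t$ for some $t\in{\cal T}$ and $k\leq h$ (because $f\geq f_0$ pointwise), also $k\leq t$, so $k\in{\cal E}(X,{\cal T})$; moreover $f+k=h\in{\cal E}(X,{\cal T})$, whence $f\perp k$. Then $\pi(f)\oplus\pi(k)=\pi(f+k)=\pi(h)=\pi(g)$, so $\pi(f)\leq\pi(g)$ by the definition of the order.

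For the forward direction, from $\pi(f)\leq\pi(g)$ I obtain some $\pi(m)$ with $\pi(f)\oplus\pi(m)=\pi(g)$. Unwinding the construction of $\oplus$ on $\Pi(X)$ from Theorem \ref{3.2}, there are representatives $f_1\approx f$ and $m_1\approx m$ with $f_1\perp m_1$ and $\pi(f_1+m_1)=\pi(g)$. The difficulty is that it is $f_1$, not the prescribed $f$, that is orthogonal to $m_1$, so I must transport this orthogonality onto $f$ itself. Applying Lemma \ref{lem1} to $\pi(f)=\pi(f_1)$ yields a common complement $e$ with $f+e\in{\cal T}$ and $f_1+e\in{\cal T}$, that is $f\approx_e f_1$. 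The algebraic hypothesis on $(X,{\cal T})$ is precisely the tool that lets orthogonality pass between perspective events: from $f_1\approx_e f$ together with $f_1\perp m_1$ I conclude $f\perp m_1$. Setting $h=f+m_1\in{\cal E}(X,{\cal T})$ then gives $f\leq h$, and by well-definedness of $\oplus$ (Theorem \ref{3.2}, equivalently Lemma \ref{lem1}) $\pi(h)=\pi(f)\oplus\pi(m_1)=\pi(f_1)\oplus\pi(m_1)=\pi(f_1+m_1)=\pi(g)$, which is the desired witness.

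The main obstacle is entirely in this last direction, namely the bookkeeping of representatives: producing a witness $h$ that dominates the given $f$ rather than merely some perspective copy $f_1$ of it. This is exactly the point at which the algebraic condition is indispensable, and I expect the only delicate verification to be the application of that condition with the common complement $e$ supplied by Lemma \ref{lem1}; the remaining equalities of $\pi$-classes are then formal.
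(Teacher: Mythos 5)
Your proof is correct and follows essentially the same route as the paper: the reverse direction via $k=h-f$ and $\pi(f)\oplus\pi(k)=\pi(h)=\pi(g)$ is identical, and the forward direction likewise produces the witness $h=f+i$ from a summand realizing $\pi(f)\leq\pi(g)$. The only difference is that the paper simply asserts the existence of an $i$ with $f\perp i$ and $\pi(f)\oplus\pi(i)=\pi(g)$ for the \emph{given} representative $f$, whereas you justify this transport of orthogonality from a perspective copy $f_1$ to $f$ itself using the common complement from Lemma \ref{lem1} and the algebraic condition (in its standard form with a separate event $k$, which is what the paper's definition must be read as saying); this fills in a step the paper elides rather than changing the argument.
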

\begin {proof}
Let $(X,\cal T)$ be an algebraic $E$-test space. Let $f,g\in  {\cal E}(X,{\cal T})$ . 

Let $\pi(f)\leq\pi(g)$ than there exists $i\in {\cal E}(X,{\cal T})$ such that  $f\perp i$ and $\pi(f)\oplus\pi(i)=\pi(g)$ so $\pi(f+i)=\pi(g)$. Let $h=f+i\in{\cal E}(X,{\cal T})$. Then
$f\leq h$ and $\pi(h)=\pi(f+i)=\pi(g)$.

Assume that there exists $h\in {\cal E}(X,{\cal T})$ such that $\pi(h)=\pi(g)$ and $f\leq h$. Let $i=h-f\in {\cal E}(X,{\cal T})$. Then $h=f+i$ and $\pi(f)\oplus\pi(i)=\pi(f+i)=\pi(h)$ so $\pi(f)\leq\pi(h)=\pi(g)$.
\end {proof}

The above Lemma implies that if $f,g\in  {\cal E}(X,{\cal T})$ and $f\leq g$ (i.e. $f(x)\leq g(x)$ for all $x\in X$) then $\pi(f)\leq\pi(g)$.

\begin{lemma}\label{lem3}
Let $(X,\cal T)$ be an algebraic $E$-test space. Let $f,g\in  {\cal E}(X,{\cal T})$ then $\pi(f)\leq \pi(g)\iff\exists_{t_1,t_2\in{\cal T}}\;g\leq t_2\land f\leq (t_1+g)-t_2$
\end{lemma}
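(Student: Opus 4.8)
The plan is to reduce the statement to Lemmas \ref{lem1} and \ref{lem2}, the idea being that the intermediate event $h$ supplied by Lemma \ref{lem2} can be written explicitly in terms of two tests by means of Lemma \ref{lem1}. Throughout I would keep in mind that for $u,v\in{\mathbb N}_0^X$ the difference $u-v$ is only defined when $v\leq u$, so the hypothesis $f\leq(t_1+g)-t_2$ silently carries the well-definedness requirement $t_2\leq t_1+g$.

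For the forward implication I would start from $\pi(f)\leq\pi(g)$ and apply Lemma \ref{lem2} to obtain an $h\in{\cal E}(X,{\cal T})$ with $\pi(h)=\pi(g)$ and $f\leq h$. Feeding the equality $\pi(h)=\pi(g)$ into Lemma \ref{lem1} would then produce $t_1,t_2\in{\cal T}$ with $h\leq t_1$, $g\leq t_2$ and $t_1-h=t_2-g$. Reading this last equation pointwise as $t_1+g=t_2+h$ and solving for $h$ gives $h=(t_1+g)-t_2$; substituting into $f\leq h$ yields $f\leq(t_1+g)-t_2$, which together with $g\leq t_2$ is exactly the required right-hand side.

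For the converse I would assume $g\leq t_2$ and $f\leq(t_1+g)-t_2$ for some $t_1,t_2\in{\cal T}$, set $h=(t_1+g)-t_2\in{\cal E}(X,{\cal T})$, and aim to verify $\pi(h)=\pi(g)$ via Lemma \ref{lem1} using $t_1,t_2$ themselves as witnesses. Here $g\leq t_2$ forces $(t_1+g)-t_2\leq t_1$, so $h\leq t_1$; the hypothesis gives $g\leq t_2$; and the computation $t_1-h=t_1-\bigl((t_1+g)-t_2\bigr)=t_2-g$ supplies the needed equality. Lemma \ref{lem1} then gives $\pi(h)=\pi(g)$, and since $f\leq h$, Lemma \ref{lem2} immediately yields $\pi(f)\leq\pi(g)$.

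I expect no deep obstacle here; the only real care lies in the bookkeeping for truncated subtraction. At each step I would explicitly confirm that the subtrahend does not exceed the minuend, so that every expression stays in ${\mathbb N}_0^X$, and I would make sure the equivalence $t_1-h=t_2-g\iff t_1+g=t_2+h$ is invoked in the right direction. These domain conditions are exactly the places where an otherwise mechanical argument could quietly fail, so I would spell them out rather than suppress them.
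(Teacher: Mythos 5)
Your proposal is correct and follows essentially the same route as the paper's own proof: both directions reduce to Lemma \ref{lem2} to produce the intermediate event $h$ and to Lemma \ref{lem1} to express or verify $\pi(h)=\pi(g)$ via the tests $t_1,t_2$, with $h=(t_1+g)-t_2$. Your extra attention to the well-definedness of the pointwise subtractions is a sensible refinement but does not change the argument.
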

\begin {proof}
Let $f,g\in  {\cal E}(X,{\cal T})$ and $\pi(f)\leq\pi(g)$. By Lemma \ref{lem2} there exists $h\in  {\cal E}(X,{\cal T})$ such that $\pi(h)=\pi(g)$ and $f\leq h$. Then there exist $t_1,t_2\in{\cal T}$ such that $h\leq t_1$, $g\leq t_2$ and $t_1-h=t_2-g$ by Lemma \ref{lem1}. Hence $f\leq h=(t_1+g)-t_2$.

Now let $f,g\in  {\cal E}(X,{\cal T})$ and assume that there exist $t_1,t_2\in{\cal T}$ such that $g\leq t_2$ and $f\leq(t_1+g)-t_2$. Let $h=(t_1+g)-t_2$. Then
 $h=(t_1+g)-t_2\leq t_1+t_2-t_2=t_1$ so $h\in  {\cal E}(X,{\cal T})$. Moreover $\pi(h)=\pi(g)$ by Lemma \ref{lem1} since $h\leq t_1$, $g\leq t_2$ and
$t_1-h=t_1-((t_1+g)-t_2)=t_2-g$. Hence $\pi(f)\leq\pi(g)$ by Lemma \ref{lem2} since $\pi(h)=\pi(g)$ and $f\leq (t_1+g)-t_2=h$ which ends the proof.

\end {proof}

\begin{de}
Let $(X,\cal T)$ be an algebraic $E$-test space. Let $f,g\in  {\cal E}(X,{\cal T})$ then 
$$ub(f,g)=\{(f_1,f_2,f_3,f_4)\in{\cal T}^4\colon  
f\leq f_1, g\leq f_3, f-f_1+f_2\leq f_4, g-f_3+f_4\leq f_2\}$$
\end{de}
The set $ub(f,g)$  is non-empty since there exist $t_1,t_2\in{\cal T}$ such that $f\leq t_1$ and $g\leq t_2$ and then $(t_1,t_1,t_2,t_1)\in ub(f,g)$.

\begin{lemma}\label{lem4}
Let $ {\cal E}(X,{\cal T})$ be an algebraic $E$-test space and $f,g\in {\cal E}(X,{\cal T})$. For every $(f_1,f_2,f_3,f_4)\in ub(f,g)$ we have
\be
\item $i=\max(f-f_1+f_2,g-f_3+f_4,f_0)\in{\cal E}(X,{\cal T})$
\item $\pi(f)\leq\pi(i)$ and $\pi(g)\leq\pi(i)$
\ee

On the other hand if $h\in {\cal E}(X,{\cal T})$, $\pi(f)\leq\pi(h)$ and $\pi(g)\leq\pi(h)$ then
there exists $(f_1.f_2,f_3,f_4)\in ub(f,g)$ such that $i=\max(f-f_1+f_2,g-f_3+f_4,f_0)\leq h$.
 
\end{lemma}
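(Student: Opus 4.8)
The plan is to work throughout in the additive group ${\mathbb Z}^X$ of all integer-valued functions on $X$, with $+$, $-$, $\leq$ and $\max$ taken pointwise; the members of ${\cal T}$ and of ${\cal E}(X,{\cal T})$ are precisely those functions whose values lie in the submonoid ${\mathbb N}_0^X$. Under this reading the expressions $f-f_1+f_2$ and $g-f_3+f_4$ occurring in the definition of $ub(f,g)$ and in $i$ are honest elements of ${\mathbb Z}^X$ (possibly attaining negative values), and the purpose of the outer $\max(\,\cdot\,,\,\cdot\,,f_0)$ is simply to return to ${\mathbb N}_0^X$. Abbreviate $i_f=f-f_1+f_2$ and $i_g=g-f_3+f_4$, so that $i=\max(i_f,i_g,f_0)$. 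The essential tool will be Lemma \ref{lem3}.

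For the first assertion I would begin by verifying $i\in{\cal E}(X,{\cal T})$ through the bound $i\leq f_2$: since $f\leq f_1$ we have $i_f=(f-f_1)+f_2\leq f_2$ pointwise, the inequality $i_g\leq f_2$ is one of the two defining cross-conditions of $ub(f,g)$, and $f_0\leq f_2$ is trivial, so $i\leq f_2\in{\cal T}$. Dually, $g\leq f_3$ gives $i_g\leq f_4$, which together with the remaining cross-condition $i_f\leq f_4$ and $f_0\leq f_4$ yields $i\leq f_4$. To get $\pi(f)\leq\pi(i)$ I would feed $(t_1,t_2)=(f_1,f_2)$ into Lemma \ref{lem3}: its first requirement is $i\leq f_2$, already established, and its second requirement $f\leq(f_1+i)-f_2$ rearranges to $i_f\leq i$, which holds because $i\geq i_f$. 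Symmetrically, $(t_1,t_2)=(f_3,f_4)$ together with $i\leq f_4$ and $i_g\leq i$ gives $\pi(g)\leq\pi(i)$.

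For the converse I would take an upper bound $h$ and unwind the two hypotheses through Lemma \ref{lem3}. From $\pi(f)\leq\pi(h)$ I obtain $p_1,p_2\in{\cal T}$ with $h\leq p_2$ and $f-p_1+p_2\leq h$, and from $\pi(g)\leq\pi(h)$ I obtain $q_1,q_2\in{\cal T}$ with $h\leq q_2$ and $g-q_1+q_2\leq h$. Setting $(f_1,f_2,f_3,f_4)=(p_1,p_2,q_1,q_2)$, membership in $ub(f,g)$ is immediate: $h\leq p_2$ forces $f\leq p_1$ and $h\leq q_2$ forces $g\leq q_1$, while the cross-conditions come from $f-p_1+p_2\leq h\leq q_2$ and $g-q_1+q_2\leq h\leq p_2$. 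Since now $i_f\leq h$, $i_g\leq h$ and $f_0\leq h$, it follows that $i=\max(i_f,i_g,f_0)\leq h$, which is exactly what is claimed.

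All of these steps are short computations once the ambient group ${\mathbb Z}^X$ is fixed, so I expect the main friction to be bookkeeping: committing to the integer-valued interpretation of the differences and keeping straight which pair $(t_1,t_2)$ is used in Lemma \ref{lem3} for $f$ and which for $g$. The one inequality that is not purely formal is $i_f\leq f_2$ (and its dual $i_g\leq f_4$): here the built-in hypotheses $f\leq f_1$ and $g\leq f_3$ of $ub(f,g)$ are what combine with the two explicit cross-conditions to make $i$ an event in the first place, and this is the crux of the first assertion.
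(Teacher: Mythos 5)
Your proposal is correct and follows essentially the same route as the paper's proof: the bounds $i\leq f_2$ and $i\leq f_4$ obtained from $f\leq f_1$, $g\leq f_3$ and the two cross-conditions, two applications of Lemma \ref{lem3} with the pairs $(f_1,f_2)$ and $(f_3,f_4)$, and for the converse the same unwinding of $\pi(f)\leq\pi(h)$ and $\pi(g)\leq\pi(h)$ through Lemma \ref{lem3} to assemble a tuple in $ub(f,g)$ with $i\leq h$. The only cosmetic difference is your explicit choice to work in ${\mathbb Z}^X$, which the paper leaves implicit.
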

\begin {proof}
Let $f,g\in {\cal E}(X,{\cal T})$ and  $(f_1,f_2,f_3,f_4)\in ub(f,g)$.
Let $i\in{\mathbb N}_0^X$ be a function such that $$i(x)=\max(f(x)-f_1(x)+f_2(x),g(x)-f_3(x)+f_4(x),0)$$ for every $x\in X$. Then 

$$f-f_1+f_2\leq i\eqno(1)$$
and
$$g-f_3+f_4\leq i\eqno(2)$$

We have $f(x)-f_1(x)+f_2(x)\leq f_1(x)-f_1(x)+f_2(x)=f_2(x)$ and $g(x)-f_3(x)+f_4(x)\leq f_2(x)$ since $(f_1,f_2,f_3,f_4)\in ub(f,g)$. Thus $0\leq i(x)\leq f_2(x)$ for every $x\in X$.

Hence $i\leq f_2\in{\cal T}$ so $i\in {\cal E}(X,{\cal T})$ and $\pi(f)\leq\pi(i)$ by Lemma \ref{lem3} since $i\leq f_2$ and $f\leq f_1+i-f_2$ by $(1)$. 
Since $(f_1,f_2,f_3,f_4)\in ub(f,g)$ therefore $f-f_1+f_2\leq f_4$ and $g-f_3+f_4\leq f_3-f_3+f_4=f_4$ thus $i\leq f_4$ and $g-f_3+f_4\leq i$ by $(2)$ so $g\leq (f_3+i)-f_4$ and $\pi(g)\leq\pi(i)$ by Lemma \ref{lem3}.
Therefore $\pi(i)$ is an upper bound of $\pi(f)$ and $\pi(g)$. 

Now assume that $h\in {\cal E}(X,{\cal T})$, $\pi(f)\leq\pi(h)$ and $\pi(g)\leq\pi(h)$. By Lemma \ref{lem3} there exist $f_1,f_2,f_3,f_4\in {\cal T}$ such that

$$
h\leq f_2,f\leq (f_1+h)-f_2,h\leq f_4,g\leq (f_3+h)-f_4.\eqno(3)
$$

Hence $ f\leq f_1-f_2+h\leq f_1-f_2+f_2=f_1$, $g\leq f_3-f_4+h\leq f_3-f_4+f_4=f_3$ and $f-f_1+f_2\leq h\leq f_4$, $g-f_3+f_4\leq h\leq f_2$ so $(f_1,f_2,f_3,f_4)\in ub(f,g)$.
Let $i\in{\mathbb N}_0^X$ be a function such that $$i(x)=\max(f(x)-f_1(x)+f_2(x),g(x)-f_3(x)+f_4(x),0)$$ for every $x\in X$. 
Moreover $f-f_1+f_2\leq h$ and $g-f_3+f_4\leq h$ by (3).  Hence by the definition of $i$ we have $i\leq h$.
\end {proof}

There is some connections between upper bounds of $\pi(f)$ and $\pi(g)$ and the set $ub(f,g)$. By lemma \ref{lem4} we can assign upper bound in the form $\pi(\max(f-f_1+f_2,g-f_3+f_4,f_0))$ of $\pi(f)$ and $\pi(g)$ to every $(f_1,f_2,f_3,f_4)\in ub(f,g)$. On the other hand one can assign $(f_1,f_2,f_3,f_4)\in ub(f,g)$ to every
 upper bound of $\pi(f)$ and $\pi(g)$
\begin{de}
Let $(X,\cal T)$ be an algebraic $E$-test space. Let $f,g\in  {\cal E}(X,{\cal T})$ then $f,g$ satisfy J-condition for $(f_1,f_2,f_3,f_4)\in{\cal T}^4$ if and only if 
\be
\item[\rm(J1)]
$$
f\leq f_1,\quad g\leq f_3,\quad f-f_1+f_2\leq f_4,\quad g-f_3+f_4\leq f_2
$$ and
\item[\rm(J2)] for every $g_1,g_2,g_3,g_4\in {\cal T}$ such that $(g_1,g_2,g_3,g_4)\in ub(f,g)$ i.e.
$$
f\leq g_1,\quad g\leq g_3,\quad f-g_1+g_2\leq g_4,\quad g-g_3+g_4\leq g_2
$$
we have $$\pi(\max(f-f_1+f_2,g-f_3+f_4,f_0))\leq\pi(\max(f-g_1+g_2,g-g_3+g_4,f_0)).$$ 
\ee

\end{de}

\begin{thm}\label{jcon}
Let $ {\cal E}(X,{\cal T})$ be an algebraic $E$-test space and $f,g\in {\cal E}(X,{\cal T})$ then $\pi(f)\lor\pi(g)$ exists in $\Pi(X)$ if and only if there exist $f_1,f_2,f_3,f_4\in{\cal T}$ such that  $f,g$ satisfy J-condition for $(f_1,f_2,f_3,f_4)\in{\cal T}^4$.
Moreover if  $f,g$ satisfy J-condition for $(f_1,f_2,f_3,f_4)\in ub(f,g)$. then
$$\pi(f)\lor\pi(g)=\pi(h),$$
where $h(x)=\max(f(x)-f_1(x)+f_2(x),g(x)-f_3(x)+f_4(x),0)$ for every $x\in X$.
\end{thm}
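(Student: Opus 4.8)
The plan is to reduce the existence of the join to the existence of a least element in a family of \emph{canonical} upper bounds indexed by $ub(f,g)$, and then to recognize the J-condition as exactly the statement that one such index yields that least element. Throughout I write, for a tuple $\tau=(f_1,f_2,f_3,f_4)\in ub(f,g)$,
$$i_\tau=\max(f-f_1+f_2,\,g-f_3+f_4,\,f_0),$$
and I let $U=\{\pi(h):h\in{\cal E}(X,{\cal T}),\ \pi(f)\leq\pi(h),\ \pi(g)\leq\pi(h)\}$ be the set of all upper bounds of $\{\pi(f),\pi(g)\}$ in $\Pi(X)$, and $V=\{\pi(i_\tau):\tau\in ub(f,g)\}$. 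First I would record two consequences of Lemma \ref{lem4}. By part (2) of that lemma every $\pi(i_\tau)$ is an upper bound, so $V\subseteq U$. By the second half of Lemma \ref{lem4}, for each $\pi(h)\in U$ there is a tuple $\tau\in ub(f,g)$ with $i_\tau\leq h$; then $\pi(i_\tau)\leq\pi(h)$ by the remark following Lemma \ref{lem2} that pointwise $\leq$ implies $\leq$ in $\Pi(X)$. Thus $V\subseteq U$ and every element of $U$ dominates some element of $V$.

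Next I would prove the purely order-theoretic fact that, under these two properties, $U$ has a least element if and only if $V$ does, and the two least elements coincide. Indeed, if $m$ is least in $V$, then $m\in V\subseteq U$, and for any $u\in U$ there is $v\in V$ with $v\leq u$ and $m\leq v$, whence $m\leq u$; so $m$ is least in $U$. Conversely, if $m$ is least in $U$, then $m\leq v$ for every $v\in V\subseteq U$, while some $v_0\in V$ satisfies $v_0\leq m$, forcing $m=v_0\in V$, so $m$ is least in $V$. Since $\pi(f)\lor\pi(g)$ is by definition the least element of $U$, its existence is equivalent to the existence of a least element of $V$.

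Finally I would translate the J-condition into this language. For a fixed $\tau=(f_1,f_2,f_3,f_4)$, condition (J1) is precisely $\tau\in ub(f,g)$, which is what guarantees $\pi(i_\tau)\in V$; and condition (J2) states $\pi(i_\tau)\leq\pi(i_{\tau'})$ for every $\tau'\in ub(f,g)$, i.e. that $\pi(i_\tau)$ is $\leq$ every element of $V$. Hence $f,g$ satisfy the J-condition for $\tau$ exactly when $\pi(i_\tau)$ is the least element of $V$. Combining this with the previous step, $\pi(f)\lor\pi(g)$ exists iff some $\tau$ satisfies the J-condition, and then $\pi(f)\lor\pi(g)=\pi(i_\tau)=\pi(h)$ with $h=\max(f-f_1+f_2,g-f_3+f_4,f_0)$, as claimed.

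I expect the only delicate point to be the bookkeeping in the order-theoretic step, and in particular ensuring that the clause ``every upper bound dominates a canonical one'' (the second half of Lemma \ref{lem4}) is invoked in the correct direction, since everything substantive about test spaces is already packaged inside Lemma \ref{lem4}. Once the correspondence between $U$ and $V$ is set up cleanly, the theorem follows formally.
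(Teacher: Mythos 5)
Your proposal is correct and is essentially the paper's own argument: both directions rest on exactly the same two halves of Lemma \ref{lem4} together with the observation that (J2) says $\pi(i_\tau)$ is minimal among the canonical upper bounds. The only difference is presentational --- you package the two implications as a single order-theoretic equivalence between least elements of $U$ and of $V$, whereas the paper writes out the $\Rightarrow$ and $\Leftarrow$ directions separately with the same ingredients.
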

\begin {proof}
\be
\item[$\Rightarrow$]
Let $f,g\in {\cal E}(X,{\cal T})$ and $\pi(f)\lor\pi(g)=\pi(h)$ for some $h\in {\cal E}(X,{\cal T})$. Then $\pi(f)\leq\pi(h)$ and $\pi(g)\leq\pi(h)$. By Lemma \ref{lem4}
there exists $(f_1.f_2,f_3,f_4)\in ub(f,g)$ such that $$i_1=\max(f-f_1+f_2,g-f_3+f_4,f_0)\leq h.$$ We show that $f,g$  satisfy J-condition for $(f_1,f_2,f_3,f_4)\in{\cal T}^4$.

We know that $(f_1.f_2,f_3,f_4)\in ub(f,g)$ so 
$$
f\leq f_1,g\leq f_3, f-f_1+f_2\leq f_4,g-f_3+f_4\leq f_2
$$

and (J1) is fulfilled.

Now we show that $(J2)$ is also satisfied.

Assume that there exist $g_1,g_2,g_3,g_4\in {\cal T}$ such that
$$
f\leq g_1,g\leq g_3, f-g_1+g_2\leq g_4,g-g_3+g_4\leq g_2
$$
so $(g_1.g_2,g_3,g_4)\in ub(f,g)$.

Let $i_2\in{\mathbb N}_0^X$ be a function such that $$i_2(x)=\max(f(x)-g_1(x)+g_2(x),g(x)-g_3(x)+g_4(x),0)$$ for every $x\in X$. Then 

\be
\item $i_2=\max(f-g_1+g_2,g-g_3+g_4,f_0)\in{\cal E}(X,{\cal T})$,
\item $\pi(f)\leq\pi(i_2)$ and $\pi(g)\leq\pi(i_2)$
\ee

by Lemma \ref{lem4}.

Therefore $\pi(i_2)$ is an upper bound of $\pi(f)$ and $\pi(g)$. We know that $\pi(h)$ is a join of $\pi(f)$ and $\pi(g)$ so
$\pi(h)\leq \pi(i_2)$. By Lemma \ref{lem2} we know that $\pi(i_1)\leq\pi(h)$ since $i_1\leq h$. Thus $\pi(i_1)\leq\pi(i_2)$ i.e. 
$$\pi(\max(f-f_1+f_2,g-f_3+f_4,f_0))\leq\pi(\max(f-g_1+g_2,g-g_3+g_4,f_0))$$
so $f,g$ satisfy $J$-condition for $(f_1,f_2,f_3,f_4)\in{\cal T}^4$.
\item[$\Leftarrow$]

Assume that $f,g$ satisfy J-condition for $(f_1,f_2,f_3,f_4)\in {\cal T}^4$. Let 
$$i_1=\max(f-f_1+f_2,g-f_3+f_4,f_0).$$
 We show that $i_1\in{\cal E}(X,{\cal T})$ and  $\pi(f)\lor\pi(g)=\pi(i_1)$.

 By (J1) we have
$$
f\leq f_1,g\leq f_3, f-f_1+f_2\leq f_4,g-f_3+f_4\leq f_2
$$

so $(f_1,f_2,f_3,f_4)\in ub(f,g)$ and by Lemma \ref{lem4} we have
\be
\item $i_1\in{\cal E}(X,{\cal T})$,
\item $\pi(f)\leq\pi(i_1)$ and $\pi(g)\leq\pi(i_1)$.
\ee
Therefore $\pi(i_1)$ is an upper bound of $\pi(f) $ and $\pi(g)$.

Assume that $h\in{\cal E}(X,{\cal T})$,  $\pi(f)\leq\pi(h)$  and $\pi(g)\leq\pi(h)$. By lemma \ref{lem4} there exist $(g_1,g_2,g_3,g_4)\in ub(f,g)$ such that
$$i_2=\max(f-g_1+g_2,g-g_3+g_4,f_0)\leq h.$$
Hence $\pi(i_2)\leq\pi(h)$ by Lemma \ref{lem2}.

By (J2) we have

$$\pi(\max(f-f_1+f_2,g-f_3+f_4,f_0))\leq\pi(\max(f-g_1+g_2,g-g_3+g_4,f_0))$$ 
so $\pi(i_1)\leq\pi(i_2)$.

Hence $\pi(i_1)\leq\pi(i_2)\leq\pi(h)$ so $\pi(i_1)\leq\pi(h)$. Therefore $$\pi(i_1)=\pi(\max(f-f_1+f_2,g-f_3+f_4,f_0))=\pi(f)\lor\pi(g).$$
\ee
\end {proof}

Now we deal with the meets in effect algebras.

\begin{de}
Let $(X,\cal T)$ be an algebraic $E$-test space. Let $f,g\in  {\cal E}(X,{\cal T})$ then 
$$lb(f,g)=\{(f_1,f_2,f_3,f_4)\in{\cal T}^4\colon 
f\leq f_2, g\leq f_4,f-f_2+f_1\geq f_0, g-f_4+f_3\geq f_0\}$$
\end{de}
The set $lb(f,g)$ is non-empty since  there exist $t_1,t_2\in{\cal T}$ such that $f\leq t_1$ and $g\leq t_2$ and then $(t_1,t_1,t_2,t_2)\in lb(f,g)$. 

\begin{lemma}\label{lem5}
Let $ {\cal E}(X,{\cal T})$ be an algebraic $E$-test space and $f,g\in {\cal E}(X,{\cal T})$. For every $(f_1,f_2,f_3,f_4)\in lb(f,g)$ we have
\be
\item $i=\min(f-f_2+f_1,g-f_4+f_3)\in{\cal E}(X,{\cal T})$
\item $\pi(f)\geq\pi(i)$ and $\pi(g)\geq\pi(i)$
\ee

On the other hand if $h\in {\cal E}(X,{\cal T})$, $\pi(f)\geq\pi(h)$ and $\pi(g)\geq\pi(h)$ then
there exists $(f_1.f_2,f_3,f_4)\in lb(f,g)$ such that $i=\min(f-f_2+f_1,g-f_4+f_3,)\geq h$.
 
\end{lemma}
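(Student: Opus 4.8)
The plan is to prove Lemma~\ref{lem5} as the exact order-dual of Lemma~\ref{lem4}, using Lemma~\ref{lem3} as the only real tool. The statement splits into the two assertions of the first part (each tuple of $lb(f,g)$ produces a common lower bound $\pi(i)$ of $\pi(f)$ and $\pi(g)$) and the converse part (every common lower bound is dominated by some such $\pi(i)$), and I would treat them in that order.

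For the first part, fix $(f_1,f_2,f_3,f_4)\in lb(f,g)$ and write $a=f-f_2+f_1$ and $b=g-f_4+f_3$; both lie in ${\mathbb N}_0^X$ precisely because the defining inequalities $f-f_2+f_1\geq f_0$ and $g-f_4+f_3\geq f_0$ of $lb(f,g)$ guarantee the subtractions make sense. Since $f\leq f_2$ and $g\leq f_4$ we get $a\leq f_1\in{\cal T}$ and $b\leq f_3\in{\cal T}$, so $a,b\in{\cal E}(X,{\cal T})$ and hence $i=\min(a,b)\leq a\leq f_1$ lies in ${\cal E}(X,{\cal T})$, giving item~(1). For item~(2) I would apply Lemma~\ref{lem3} twice with the roles swapped so that the order reads $\pi(i)\leq\pi(f)$: taking the pair $(f_1,f_2)$, the conditions $f\leq f_2$ and $i\leq (f_1+f)-f_2=a$ hold, so $\pi(i)\leq\pi(f)$; taking $(f_3,f_4)$, the conditions $g\leq f_4$ and $i\leq (f_3+g)-f_4=b$ hold, so $\pi(i)\leq\pi(g)$.

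For the converse, suppose $h\in{\cal E}(X,{\cal T})$ satisfies $\pi(h)\leq\pi(f)$ and $\pi(h)\leq\pi(g)$. Applying Lemma~\ref{lem3} to $\pi(h)\leq\pi(f)$ yields $f_1,f_2\in{\cal T}$ with $f\leq f_2$ and $h\leq (f_1+f)-f_2=f-f_2+f_1$, and applying it to $\pi(h)\leq\pi(g)$ yields $f_3,f_4\in{\cal T}$ with $g\leq f_4$ and $h\leq (f_3+g)-f_4=g-f_4+f_3$. The well-definedness of these two truncated differences is exactly the pair of nonnegativity conditions in the definition of $lb(f,g)$, so $(f_1,f_2,f_3,f_4)\in lb(f,g)$. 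Finally, since $h$ is below both $f-f_2+f_1$ and $g-f_4+f_3$ pointwise, $h\leq\min(f-f_2+f_1,g-f_4+f_3)=i$, which is the required conclusion.

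The computations here are routine, so the only place demanding care --- and the step I would double-check most --- is the correct reorientation of Lemma~\ref{lem3} for the reversed order and the matching of its implicit well-definedness hypotheses with the nonnegativity clauses $f-f_2+f_1\geq f_0$ and $g-f_4+f_3\geq f_0$ of $lb(f,g)$. Once that bookkeeping is set up, everything else is a mirror of the proof of Lemma~\ref{lem4}.
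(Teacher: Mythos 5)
Your proposal is correct and follows essentially the same route as the paper's proof: both parts rest on the two applications of Lemma~\ref{lem3} (reoriented for the reversed order), with the nonnegativity clauses of $lb(f,g)$ supplying membership of $\min(f-f_2+f_1,\,g-f_4+f_3)$ in ${\cal E}(X,{\cal T})$. No substantive differences from the paper's argument.
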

\begin {proof}
Let $f,g\in {\cal E}(X,{\cal T})$ and  $(f_1,f_2,f_3,f_4)\in lb(f,g)$.
Let $i\in{\mathbb N}_0^X$ be a function such that $$i(x)=\min(f(x)-f_2(x)+f_1(x),g(x)-f_4(x)+f_3(x))$$ for every $x\in X$. Then $i\leq f-f_2+f_1\leq f_1\in{\cal T}$ since $f\leq f_2$.
Moreover $f-f_2+f_1\geq f_0$, $g-f_4+f_3\geq f_0$ since $(f_1,f_2,f_3,f_4)\in lb(f,g)$ so $i\geq f_0$ and $i\in {\cal E}(X,{\cal T})$.

By the definition of $i$ we have $i\leq f- f_2+f_1$ and $i\leq g-f_4+f_3$.
Moreover $f\leq f_2$ and $g\leq f_4$ since $(f_1,f_2,f_3,f_4)\in lb(f,g)$. Thus $\pi(i)\leq\pi(f)$ and $\pi(i)\leq\pi(g)$  by Lemma \ref{lem3}.

Now assume that $h\in {\cal E}(X,{\cal T})$, $\pi(f)\geq\pi(h)$ and $\pi(g)\geq\pi(h)$. By Lemma \ref{lem3} there exist $f_1,f_2,f_3,f_4\in {\cal T}$ such that

$$
f\leq f_2,h\leq (f_1+f)-f_2,g\leq f_4,h\leq (f_3+g)-f_4
$$so

$$f\leq f_2, g\leq f_4,f-f_2+f_1\geq f_0, g-f_4+f_3\geq f_0$$and $(f_1,f_2,f_3,f_4)\in lb(f,g)$.

We know that $h\leq(f_1+f)-f_2$ and $h\leq(f_3+g)-f_4$ so $h\leq i=\min(f-f_2+f_1,g-f_4+f_3)$.
\end {proof}

\begin{de}
Let $(X,\cal T)$ be an algebraic $E$-test space. Let $f,g\in  {\cal E}(X,{\cal T})$ then $f,g$ satisfy M-condition for $(f_1,f_2,f_3,f_4)\in{\cal T}^4$ if and only if
\begin{enumerate}
\item[\rm(M1)]$$
f\leq f_2,\quad g\leq f_4,\quad f-f_2+f_1\geq f_0,\quad g-f_4+f_3\geq f_0
$$ and
\item[\rm(M2)] for every $g_1,g_2,g_3,g_4\in {\cal T}$ such that $(g_1,g_2,g_3,g_4)\in lb(f,g)$ i.e.
$$
f\leq g_2,\quad g\leq g_4,\quad f-g_2+g_1\geq f_0,\quad g-g_4+g_3\geq f_0
$$
we have $$\pi(\min(f-f_2+f_1,g-f_4+f_3))\geq\pi(\min(f-g_1+g_2,g-g_3+g_4)).$$ 
\end{enumerate}
\end{de}

\begin{thm}\label{mcon}
Let $ {\cal E}(X,{\cal T})$ be an algebraic $E$-test space and $f,g\in {\cal E}(X,{\cal T})$ then $\pi(f)\land\pi(g)$ exists in $\Pi(X)$ if and only if  there exist $f_1,f_2,f_3,f_4\in {\cal T}$ such that $f,g$ satisfy M-condition for $(f_1,f_2,f_3,f_4)\in{\cal T}$.
Moreover if  $f,g$ satisfy M-condition for $(f_1,f_2,f_3,f_4)\in lb(f,g)$ then
$$\pi(f)\land\pi(g)=\pi(h),$$
where $h(x)=\min(f(x)-f_2(x)+f_1(x),g(x)-f_4(x)+f_3(x))$ for every $x\in X$.
\end{thm}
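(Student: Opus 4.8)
The plan is to mirror the proof of Theorem~\ref{jcon} exactly, replacing joins by meets, upper bounds by lower bounds, and Lemma~\ref{lem4} by Lemma~\ref{lem5}; the structure is a biconditional whose two directions I would prove in turn, after which the ``moreover'' clause follows from the constructions used. The key insight I carry over is the dictionary established by Lemma~\ref{lem5}: just as Lemma~\ref{lem4} set up a correspondence between quadruples in $ub(f,g)$ and upper bounds of $\pi(f),\pi(g)$ via the ``max'' formula, Lemma~\ref{lem5} sets up a correspondence between quadruples in $lb(f,g)$ and lower bounds via the ``min'' formula $i=\min(f-f_2+f_1,g-f_4+f_3)$. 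The M-condition is precisely the statement that a particular quadruple yields the \emph{greatest} lower bound among all quadruples in $lb(f,g)$, which is what it means for the meet to exist.

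For the ($\Rightarrow$) direction I would assume $\pi(f)\land\pi(g)=\pi(h)$ exists. Then $\pi(h)\leq\pi(f)$ and $\pi(h)\leq\pi(g)$, so by the second part of Lemma~\ref{lem5} there is a quadruple $(f_1,f_2,f_3,f_4)\in lb(f,g)$ with $h\leq i_1=\min(f-f_2+f_1,g-f_4+f_3)$, giving $\pi(h)\leq\pi(i_1)$ by Lemma~\ref{lem2}. I would then verify (M1), which is immediate from membership in $lb(f,g)$. For (M2), I would take any competing quadruple $(g_1,g_2,g_3,g_4)\in lb(f,g)$; by the first part of Lemma~\ref{lem5} the associated $i_2=\min(f-g_2+g_1,g-g_4+g_3)$ is again a lower bound of $\pi(f),\pi(g)$, so by the meet property $\pi(i_2)\leq\pi(h)\leq\pi(i_1)$, which is exactly the inequality (M2) demands. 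For the ($\Leftarrow$) direction I would assume the M-condition holds for some quadruple, set $i_1$ as above, and use Lemma~\ref{lem5} to conclude $i_1\in{\cal E}(X,{\cal T})$ with $\pi(i_1)\leq\pi(f),\pi(g)$; then for an arbitrary lower bound $\pi(h)$ I extract via Lemma~\ref{lem5} a quadruple giving $h\leq i_2$, hence $\pi(h)\leq\pi(i_2)$, and (M2) yields $\pi(i_2)\leq\pi(i_1)$, so $\pi(h)\leq\pi(i_1)$, establishing $\pi(i_1)$ as the greatest lower bound.

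The step I expect to require the most care is bookkeeping the index conventions rather than any genuine mathematical obstacle. The definition of $lb(f,g)$ pairs $f$ with $(f_1,f_2)$ and $g$ with $(f_3,f_4)$ in the opposite slot arrangement from $ub(f,g)$, and the min-formula uses $f-f_2+f_1$ and $g-f_4+f_3$; I must apply Lemma~\ref{lem3} in its reversed form $\pi(i)\leq\pi(f)$ (reading $i\leq f_1$ as the test bound and $i\leq f-f_2+f_1$ to witness $\pi(i)\leq\pi(f)$), and similarly for $g$. There is also a visible index mismatch in the paper's statement of (M2), where the competing bound is written with $\min(f-g_1+g_2,g-g_3+g_4)$ instead of the $\min(f-g_2+g_1,g-g_4+g_3)$ consistent with the $lb$ definition; I would keep the formulation internally consistent with Lemma~\ref{lem5} throughout, since that is what the subsequent applications actually require. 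Once these conventions are fixed, every invocation of Lemmas~\ref{lem2}, \ref{lem3}, and~\ref{lem5} is routine and the argument closes.
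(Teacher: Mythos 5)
Your proposal follows essentially the same route as the paper's own proof: both directions are argued by dualizing Theorem~\ref{jcon} via Lemma~\ref{lem5}, with (M1) read off from membership in $lb(f,g)$ and (M2) obtained from (respectively used to certify) the greatest-lower-bound property through Lemma~\ref{lem2}. Your observation about the index inconsistency in the stated form of (M2) (the paper writes $\min(f-g_1+g_2,g-g_3+g_4)$ where the $lb$ convention requires $\min(f-g_2+g_1,g-g_4+g_3)$) is correct, and the paper's proof itself silently uses the consistent form when invoking Lemma~\ref{lem5}.
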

\begin {proof}
\be
\item[$\Rightarrow$]
Let $f,g\in{\cal E}(X,{\cal T})$ and $\pi(f)\land\pi(g)=\pi(h)$ for some $h\in{\cal E}(X,{\cal T})$. Then $\pi(f)\geq\pi(h)$ and $\pi(g)\geq\pi(h)$ so by Lemma \ref{lem5}
there exists $(f_1.f_2,f_3,f_4)\in lb(f,g)$ such that $$i_1=\min(f-f_2+f_1,g-f_4+f_3,)\geq h.$$ We show that $f,g$ satisfy M-condition for $(f_1,f_2,f_3,f_4)\in{\cal T}^4$.

We know that $(f_1,f_2,f_3,f_4)\in lb(f,g)$ so

$$f\leq f_2, g\leq f_4,f-f_2+f_1\geq f_0, g-f_4+f_3\geq f_0$$

and (M1) is satisfied.

Now we show that (M2) is also satisfied.

Assume that there exist $g_1,g_2,g_3,g_4\in {\cal T}$ such that
$$
f\leq g_2,f_0\leq f+g_1-g_2,g\leq g_4,f_0\leq g+ g_3-g_4.
$$
so $(g_1,g_2,g_3,g_4)\in lb(f,g)$.

Let $i_2\in{\mathbb N}_0^X$ be a function such that

$$i_2(x)=\min(f(x)+g_1(x)-g_2(x),g(x)+g_3(x)-g_4(x))$$
for every $x\in X$. Then

\be
\item $i_2=\min(f-g_2+g_1,g-g_4+g_3)\in{\cal E}(X,{\cal T})$
\item $\pi(f)\geq\pi(i_2)$ and $\pi(g)\geq\pi(i_2)$
\ee

by Lemma \ref{lem5}.

Therefore $\pi(i_2)$ is a lower bound of  $\pi(f)$ and $\pi(g)$. We know that $\pi(h)$ is a meet of $\pi(f)$ and $\pi(g)$ so $\pi(i_2)\leq\pi(h)$. By Lemma~\ref{lem2} we know that
$\pi(i_1)\geq\pi(h)$ since $i_1\geq h$. Thus $\pi(i_2)\leq\pi(i_1)$ i.e.

 $$\pi(\min(f-f_2+f_1,g-f_4+f_3))\geq\pi(\min(f-g_1+g_2,g-g_3+g_4)).$$ 

so $f,g$ satisfies M-condition for $(f_1,f_2,f_3,f_4)\in{\cal T}^4$.

\item[$\Leftarrow$]

Assume that $f,g$ satisfy M-condition for $(f_1,f_2,f_3,f_4)\in {\cal T}^4$. Let  
$$i_1=\min(f+f_1-f_2,g+f_3-f_4).$$

 We show that $i_1\in{\cal E}(X,{\cal T})$ and  $\pi(f)\land\pi(g)=\pi(i_1)$. 

By (M1) we have

$$
f\leq f_2,\quad g\leq f_4,\quad f-f_2+f_1\geq f_0,\quad g-f_4+f_3\geq f_0
$$
so $(f_1,f_2,f_3,f_4)\in lb(f,g)$ and by Lemma~\ref{lem5} we have

\be
\item $i_1=\min(f-f_2+f_1,g-f_4+f_3)\in{\cal E}(X,{\cal T})$
\item $\pi(f)\geq\pi(i_1)$ and $\pi(g)\geq\pi(i_1)$
\ee

Therefore $\pi(i_1)$ is an lower bound of $\pi(f) $ and $\pi(g)$.

Assume that $h\in{\cal E}(X,{\cal T})$,  $\pi(h)\leq\pi(f)$  and $\pi(h)\leq\pi(g)$. By lemma \ref{lem5} there exists $(g_1,g_2,g_3,g_4)\in lb(f,g)$ such that
$$i_2=\min(f-g_1+g_2,g-g_3+g_4)\geq h.$$
Hence $\pi(i_2)\geq\pi(h)$ by Lemma~\ref{lem2}.

By (M2) we have

$$\pi(\min(f-f_2+f_1,g-f_4+f_3))\geq\pi(\min(f-g_1+g_2,g-g_3+g_4)).$$ 
so $\pi(i_1)\geq\pi(i_2)$. 

Hence $\pi(i_1)\geq\pi(i_2)\geq\pi(h)$ so $\pi(i_1)\geq\pi(h)$. Therefore

$$\pi(i_1)=\pi(\min(f+f_1-f_2,g+f_3-f_4)=\pi(f)\land\pi(g).$$
\ee
\end {proof}

In \cite{J01}, a new class of effect algebras, callled {\em homogeneous effect algebras} was introduced.

\begin{de}
An effect algebra is homogeneous if and only if, for all $u,u_1,u_2$ such that $u\leq u_1\oplus u_2\leq u'$ there are $v_1,v_2$ such that $u=v_1\oplus v_2$, $v_1\leq u_1$ and $v_2\leq u_2$.
\end{de}

The following Proposition  gives the simple way to check when a finite  effect algebra $E$ is homogeneous.

\begin{pro}\cite[Proposition 3.1]{J03}\label{homo} For every finite effect algebra $E$, the following are equivalent:
\be
\item[(a)] $E$ is homogeneous.
\item[(b)]  Let $u,f$ be a pair of atomic events such that $\bigoplus u\leq\bigoplus f\leq (\bigoplus u)'$. Then $u\leq f$.
\item[(c)] For every atom $a$ and for every atomic event $f$ such that $a\leq\bigoplus f\leq a'$, $a\in\supp(f)$
\item[(d)] Let $f,g$ be atomic tests, let $a\in\supp(f)\cap\supp(g)$. Then $f(a)=g(a)$.
\item[(e)] For every atom $a$ and every atomic event $f$ such that $a\in\supp(f)$, $f(a)=$\i$(a)$.
\ee

\end{pro}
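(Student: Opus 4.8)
The plan is to prove the five conditions equivalent through a cycle of implications, after first isolating one preliminary fact that makes the passage between abstract elements and atomic events routine. That fact is that in a finite effect algebra every nonzero element is an orthosum of atoms, so a map $g\in{\mathbb N}_0^{E\setminus\{0\}}$ with $\supp(g)\subseteq{\rm At}(E)$ is an atomic event if and only if $\bigoplus g$ is defined. One direction is immediate; for the other, given that $\bigoplus g$ exists I would decompose its orthosupplement $(\bigoplus g)'$ into atoms and adjoin that decomposition to $g$, producing a test that dominates $g$. I would establish the ``sum of atoms'' claim by induction, peeling off one atom $a\le p$ at a time, writing $p=a\oplus r$ and noting via the cancellation law that $r$ is strictly below $p$, so that finiteness forces the process to terminate.

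With this dictionary in hand, most implications among the combinatorial conditions (b)--(e) are short pointwise arguments. For $(b)\Rightarrow(c)$ I would apply (b) to the indicator $\delta_a$ of a single atom $a$: then $\bigoplus\delta_a=a$ and $(\bigoplus\delta_a)'=a'$, so the hypothesis $a\le\bigoplus f\le a'$ forces $\delta_a\le f$, i.e.\ $a\in\supp(f)$. The implications touching (e) rest on two observations: for every atomic event $f$ one has $f(a)\le m$, where $m=$\,\i$(a)$ is the isotropic index of $a$ (because $f(a)\,a\le\bigoplus f$ is defined), and $m$ is attained by some atomic test (extend $m\,\delta_a$ to a test). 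I expect the remaining arrows among (b)--(e) to go through by this kind of multiplicity bookkeeping together with the event-to-test extension.

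The first genuinely effect-algebraic step is $(a)\Rightarrow(c)$, which I would prove directly by a peeling induction on the total multiplicity $\sum_b f(b)$. Assume $a\le\bigoplus f\le a'$ yet $a\notin\supp(f)$, and pick any atom $b_0\in\supp(f)$. Splitting $\bigoplus f=b_0\oplus\bigoplus(f-\delta_{b_0})$ and applying homogeneity to $u=a$ (legitimate since $a\le\bigoplus f\le a'$) yields $a=v_1\oplus v_2$ with $v_1\le b_0$ and $v_2\le\bigoplus(f-\delta_{b_0})$. Since $a$ is an atom, one of $v_1,v_2$ is $0$ and the other is $a$; the alternative $v_1=a\le b_0$ is impossible because $a$ and $b_0$ are distinct atoms, so $a\le\bigoplus(f-\delta_{b_0})$. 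The hypotheses $a\notin\supp(f-\delta_{b_0})$ and $a\le(\bigoplus(f-\delta_{b_0}))'$ persist, so the induction runs until the support is exhausted, where $a\le0$ contradicts $a\neq0$.

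The \emph{main obstacle} will be the converse, recovering abstract homogeneity from the combinatorial data, say $(c)\Rightarrow(a)$. Here one is handed $u\le u_1\oplus u_2\le u'$ and must manufacture $u=v_1\oplus v_2$ with $v_1\le u_1$ and $v_2\le u_2$. The plan is to represent $u,u_1,u_2$ by atomic events and to build $v_1,v_2$ atom by atom, allocating each atom occurring below $u$ either to $u_1$ or to $u_2$; condition (c), in the form that an atom sandwiched between an event and the complement of that event must genuinely occur in it, is exactly what guarantees that a consistent allocation exists. The difficulty is to organise this allocation globally so that the partial orthosums remain defined and dominated at every stage, which I expect to require a secondary induction on $\sum_b u(b)$ and repeated appeals to the cancellation law and to the event/element dictionary of the first step. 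Once $(a)\Leftrightarrow(c)$ is secured, the cycle closes through $(b),(d),(e)$ using only the pointwise arguments already indicated.
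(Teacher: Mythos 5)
The paper itself gives no proof of this proposition --- it is imported verbatim from \cite[Proposition 3.1]{J03} --- so there is nothing in-paper to compare against and your proposal must be judged on its own terms. Its skeleton is sound and essentially the standard one: the atomization dictionary for finite effect algebras (every nonzero element is an orthosum of atoms; an atom-supported $g$ is an atomic event iff $\bigoplus g$ exists) is correct, your peeling induction for $(a)\Rightarrow(c)$ is complete and right (the dichotomy forced by $a=v_1\oplus v_2$ with $a$ an atom, and the persistence of the sandwich hypotheses after removing $\delta_{b_0}$, both check out), and $(b)\Rightarrow(c)$ via $u=\delta_a$ is fine.

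There remain two genuine gaps. First, the implication back to (a), which you rightly flag as the main obstacle, is left unresolved, and the ``global allocation'' you worry about is avoidable. Prove $(c)\Rightarrow(b)$ by the same peeling induction (pick $b\in\supp(u)$, note $b\leq\bigoplus f\leq b'$, apply (c) to get $b\in\supp(f)$, cancel $b$ from both orthosums and recurse on $u-\delta_b$, $f-\delta_b$), and then get $(b)\Rightarrow(a)$ in one step: write $u=\bigoplus w$ and $u_i=\bigoplus w_i$ with $w,w_1,w_2$ atomic events, so that $\bigoplus w\leq\bigoplus(w_1+w_2)\leq(\bigoplus w)'$ and (b) yields $w\leq w_1+w_2$; then $v_1=\bigoplus\min(w,w_1)$ and $v_2=\bigoplus\bigl(w-\min(w,w_1)\bigr)$ do the job, since pointwise $w-\min(w,w_1)\leq w_2$. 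Second, your ``multiplicity bookkeeping'' for the arrows through (d) and (e) is too vague, and in fact condition (e) as printed here cannot be proved because it is false: in the homogeneous chain $\{0,a,2a,3a=1\}$ the atomic event $\delta_a$ has $\delta_a(a)=1$ while the isotropic index of $a$ is $3$. In \cite{J03} condition (e) is stated for atomic \emph{tests}, and with that reading the implication $(c)\Rightarrow(e)$ needs one specific idea your sketch lacks: if $t$ is an atomic test with $0<t(a)=k$ strictly below the isotropic index of $a$, apply (c) to the event $t-k\delta_a$, whose orthosum is $(ka)'$ and which is sandwiched between $a$ and $a'$, to obtain the contradiction $a\in\supp(t-k\delta_a)$.
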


Next Corollary gives the description of sharp elements in a homogeneous finite effect algebra $E$.

\begin{cor}\cite[Corollary 3.2]{J03} \label{sh} Let $E$ be a homogeneous effect algebra, let $w$ be an atomic event of $E$ such that $\bigoplus w\in E_S$. For every $a\in\supp(w)$, $w(a)=$\i$(a)$.
\end{cor}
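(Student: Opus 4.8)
The statement follows at once from condition (e) of Proposition~\ref{homo}: since $E$ is homogeneous, every atomic event $f$ with $a\in\supp(f)$ satisfies $f(a)=$\i$(a)$, and applying this to $f=w$ gives the claim. Because the hypothesis singles out the sharp case, however, I would instead give the argument that actually uses $\bigoplus w\in E_S$, as this is the informative proof and it invokes homogeneity only through the test-comparison condition (d). The easy half is free: since $w$ is an atomic event, $\bigoplus w$ exists, so in particular $w(a)\,a$ exists, and the definition of the isotropic index forces $w(a)\le$\i$(a)$. All the content is the reverse inequality.

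For the reverse inequality I would first fix an atomic test $t$ with $w\le t$ and record that $t(a)=$\i$(a)$ for each $a\in\supp(t)$: by condition (d) the value a test assigns to an atom in its support is independent of the test, and exhibiting one test extending the event that assigns \i$(a)$ copies to $a$ and $0$ elsewhere (possible since \i$(a)\cdot a$ exists and $E$ is finite) pins this common value to \i$(a)$. Now suppose for contradiction that $w(a)<$\i$(a)$ for some $a\in\supp(w)$, and set $p=\bigoplus w$. Since $w\le t$, the complementary event $u=t-w$ lies in ${\cal E}(X,{\cal T})$; splitting the orthosum of $t$ termwise gives $\bigoplus t=\bigoplus w\oplus\bigoplus u=1$, so $\bigoplus u=p'$. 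From $w(a)\ge 1$ we get $a\le\bigoplus w=p$, while $u(a)=t(a)-w(a)=$\i$(a)-w(a)\ge 1$ gives $a\le\bigoplus u=p'$. Hence $a\le p\land p'=0$ by sharpness, contradicting that $a$ is an atom. Therefore $w(a)=$\i$(a)$.

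The step I expect to require the most care is the orthosum decomposition $\bigoplus t=\bigoplus w\oplus\bigoplus u$: one must check that every intermediate sum exists (each is dominated by $t$, so the defining inequalities of ${\cal E}(X,{\cal T})$ hold) and then push the associative and commutative laws through to regroup the summands as the $w$-part plus the $u$-part, after which uniqueness of the orthosupplement identifies $\bigoplus u$ with $p'$. Everything else is a single line. It is worth flagging how the two hypotheses divide the labor: homogeneity is used only to secure $t(a)=$\i$(a)$, whereas sharpness of $\bigoplus w$ is exactly what upgrades the generic bound $w(a)\le t(a)$ to the equality $w(a)=t(a)=$\i$(a)$, since without it the atom $a$ could be split between $p$ and $p'$.
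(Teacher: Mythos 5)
The paper offers no proof of this corollary: it is quoted verbatim from \cite{J03}. Your substantive argument is correct and is essentially the standard proof of that result. You fix a test $t\geq w$, use condition (d) of Proposition~\ref{homo} together with a test obtained by completing the maximal orthosum \i$(a)\cdot a$ by an atomic decomposition of its orthosupplement (no further copy of $a$ can occur there, by maximality of the isotropic index) to conclude $t(a)=$\i$(a)$; then, if $w(a)<t(a)$, the atom $a$ would satisfy $a\leq\bigoplus w$ and $a\leq\bigoplus(t-w)=(\bigoplus w)'$, so $a\leq\bigoplus w\land(\bigoplus w)'=0$ by sharpness, a contradiction. The regrouping $\bigoplus t=\bigoplus w\oplus\bigoplus(t-w)$ that you flag is indeed the only point needing care and goes through by generalized associativity plus uniqueness of the orthosupplement. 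Two remarks. First, you were right to discard your opening observation that the claim ``follows at once from (e)'': condition (e) as printed in this paper (with ``atomic event'') cannot be what is intended, since it would force \emph{every} atomic event to have value \i$(a)$ at every atom of its support, hence (by the remark following the corollary) a sharp orthosum; it should read ``atomic test'', and with that reading the corollary does not follow from (e) alone --- sharpness is genuinely needed, exactly as your main argument demonstrates. Second, your proof (and indeed the very notion of atomic event used in the statement) presupposes that $E$ is finite, both for Proposition~\ref{homo}(d) and for the atomic decomposition of $($\i$(a)\cdot a)'$; the corollary should be read in that context, which is the one of \cite{J03}.
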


It is easy to check that for finite homogeneous effect algebra $E$ and an atomic event $w$ of $E$ we have: $\bigoplus w\in E_S\iff $ for every $a\in\supp(w)$, $w(a)=$\i$(a)$.

In \cite{MS15} G.Jen\v ca posed the following problem: if $E$ is an orthocomplete homogeneous effect algebra $E$ such that $S(E)$ is a lattice, then $E$ is a lattice effect algebra.
In \cite{W18} Wei Ji gives an affirmative answer to this problem.

Using Theorem \ref{jcon}, Corollary \ref{sh} and Proposition \ref{homo} we created the algorithm checking if $E$ and $E_S$ are lattices. The following example of finite, not homogeneous effect algebra $E$ such that $E_S$ is a lattice  and $E$ is not a lattice was generated by computer making use of this algorithm. 

\begin{example}
Let $E$ be an effect algebra with the atomic test space given by the
following table
$$\begin{array}{|c|c|c|}\hline a&b&c\\\hline
2&2&0\\\hline1&0&2\\\hline
\end{array}$$
So in $E$ there are $3$ atoms $a,b,c$ and two atomic tests $t_1,t_2\colon E\setminus\{0\}\to{\mathbb N}_0$ such that $t_1(a)=t_1(b)=2$, $t_1(c)=0$ and $t_2(a)=1$, $t_2(b)=0$,
$t_2(c)=2$.

Then $E$ is not homogeneous by Theorem \ref{homo} since $t_2(a)=1\not=2=$\i$(a)$.

Moreover $E_S=\{1,0,2a,2b\}$. Of course, $E_S$ is a $4$-element lattice but $E$ is not a lattice since $a\lor c$ does not exist due to the fact that the set of upper bounds of $\{a,c\}$ consists of $1$, $a\oplus c$ and $2c$ so it does not have a smallest element.

 The effect algebra $E$ is represented by the following Hasse diagram:

$$
\xymatrix{&2a\oplus2b=a\oplus2c=1\ar@{-}[dl]\ar@{-}[d]\ar@{-}[ddrr]&&\\
2a\oplus b\ar@{-}[d]\ar@{-}[dr]&a\oplus2b=2c\ar@{-}[d]\ar@{-}[ddr]\ar@{-}[dr]&&\\
2a\ar@{-}[dr]&a\oplus b\ar@{-}[dl]\ar@{-}[d]&2b\ar@{-}[dll]&a\oplus c\ar@{-}[dll]\ar@{-}[dl]\\
b\ar@{-}[dr]&a\ar@{-}[d]&c\ar@{-}[dl]&\\
&0&&\\}
$$

\end{example}

\newpage


\begin{thebibliography}{99}
\bibitem{BLM91} Bush P., Lahti P.J., Mittelstadt P. The Quantum Theory of Measurement Lecture Notes in Phys. New Ser. m2, Springer-Verlag, Berlin, 1991

\bibitem{BGL95}  Bush P., Grabowski M., Lahti P.J Operational Quantum Physics, Springer-Verlag, Berlin, 1995

\bibitem{DP00} Dvure\v censkij A., Pulmannov\'a S. New Trends in Quantum Structures, Kluwer Academic Publ./Ister Science, Dordrecht-Boston-London/Bratislava, 2000.

\bibitem{FB94}   Foulis D. J., Bennett M. K.  Effect Algebras and Unsharp quantum Logics, Foundations of Physics. {\bf 24}, No. 10, 1994, 1331-1351.

\bibitem{GG94} Giuntini R., Grueuling H., Toward a formal language for unsharp properties, Found. Phys, {\bf 19}, 1994, 769-780.

\bibitem{G97} Gudder S. Effect test spaces and effect algebras, Foundations of Physics, {\bf 27} (2),1997, 287-304.

\bibitem{J01} G. Jen\v ca, Blocks of homogeneous effect algebras, Bull. Austal. Math. Soc., {\bf 64}, 2001, 81-98.

\bibitem{J03} Gejza Jen\v ca, Finite homogeneous and lattice ordered effect algebras, {\bf 272} (2-3), 2003, 197214

\bibitem{KC94} K\^opka F.,  Chovanec F., $D$-posets, Math. Slovaca, {\bf 44}, 1994, 21-34.

\bibitem{MS15} Mesiar R., Stup\v nanov\'a A., Open problems from the 12th International Conference on Fuzzy Set Theory and Its Applications, Fuzzy Sets and Systems, {\bf 261}, 112-123, 2015

\bibitem{W18} Wei Ji ,Meager projections in orthocomplete homogeneous effect algebras, Fuzzy Sets and Systems,{\bf 339}, 2018, 51-61

\end{thebibliography}
\end{document}